\newcommand{\pc}{\mathbf{P}}
\newcommand{\pp}{\mathbb{P}}
\newcommand{\shg}{ higher granular operator space }
\newcommand{\vect}[2]{{#1_1,#1_2,\dotsc,#1_#2}}
\begin{document}

\setcounter{page}{1}
 
\title{Approximations from Anywhere and General Rough Sets }

\author{\textbf{A. Mani}}
\institute{Department of Pure Mathematics\\
University of Calcutta\\
9/1B, Jatin Bagchi Road\\
Kolkata(Calcutta)-700029, India\\
\email{$a.mani.cms@gmail.com$}\\
Homepage: \url{http://www.logicamani.in}}

\maketitle

\begin{abstract}
Not all approximations arise from information systems. The problem of fitting approximations, subjected to some rules (and related data), to information systems in a rough scheme of things is known as the \emph{inverse problem}. The inverse problem is more general than the duality (or abstract representation) problems and was introduced by the present author in her earlier papers. From the practical perspective, a few (as opposed to one) theoretical frameworks may be suitable for formulating the problem itself. \emph{Granular operator spaces} have been recently introduced and investigated by the present author in her recent work in the context of antichain based and dialectical semantics for general rough sets. The nature of the inverse problem is examined from number-theoretic and combinatorial perspectives in a higher order variant of granular operator spaces and some necessary conditions are proved. The results and the novel approach would be useful in a number of unsupervised and semi supervised learning contexts and algorithms.

\keywords{Inverse Problem, Duality, Rough Objects, Granular operator Spaces, High Operator Spaces,Anti chains, Combinatorics, Hybrid Methods}
\end{abstract}

\section{Introduction}

General rough set theory specifically targets information systems as the object of study in the sense that starting from information systems, approximations are defined and rough objects of various kinds are studied \cite{ZPB,gdu,AM240,lp2011,sk05,ppm2}. But the focus need not always be so. In duality problems, the problem is to generate the information system (to the extent possible) from semantic structures like algebras or topological algebras associated (see for example \cite{BC1,du,AM3,AM240,IGO2007,IGO2001}). Logico-algebraic and other semantic structures typically capture reasoning and processes in the earlier mentioned approach.

The concept of \emph{inverse problem} was introduced by the present author in \cite{AM3} and was subsequently refined in \cite{AM240}. In simple terms, the problem is a generalization of the duality problem which may be obtained by replacing the semantic structures with parts thereof. Thus the goal of the problem is to fit a given set of approximations and some semantics to a suitable rough process originating from an information system. Examples of approximations that are not rough in any sense are common in misjudgments and irrational reasoning guided by prejudice. 

The above simplification is obviously dense because it has not been formulated in a concrete setup. It also needs many clarifications at the theoretical level. At the theoretical level again a number of frameworks appear to be justified. These can be restricted further by practical considerations. All this will not be discussed in full detail in this paper (for reasons of space). Instead a specific minimalist framework called \emph{Higher Granular Operator Space} is proposed first and the problem is developed over it by the present author. All of the results proved are from a combinatorial perspective and on the basis of these results the central question can be answered in the negative in many cases.

\section{Background}

A relational system is a tuple of the form $\mathfrak{S} = \left\langle S, \vect{R}{n} \right\rangle$ with $S$ being a set and $R_i$ being predicates of arity $\nu_i$. The type of $\mathfrak{S}$ is $(\vect{\nu}{n})$.  If $\mathfrak{H} = \left\langle H, \vect{Q}{n} \right\rangle$ is another relational system of the same type and $\varphi :S \longmapsto H$ a map satisfying for each $i$, \[ (R_i a_1 a_2 \ldots a_{\nu_{i}} \longrightarrow R_i \varphi (a_1) \varphi (a_2)\ldots \varphi (a_{\nu_{i}})),\]
is a \emph{relational morphism} \cite{mal}. If $\varphi$ is also bijective, then it is referred to as a \emph{relational isomorphism}.

By an \emph{Information System} $\mathcal{I}$, is meant a structure of the form \[\mathcal{I}\,=\, \left\langle \mathbb{O},\, At,\, \{V_{a} :\, a\in At\},\, \{f_{a} :\, a\in At\}  \right\rangle \]
with $\mathbb{O}$, $At$ and $V_{a}$ being respectively sets of \emph{Objects}, \emph{Attributes} and \emph{Values} respectively. It is \emph{deterministic} (or complete) if for each $a\in At$, $f_{a}:\, O \,\longmapsto \, V_{a}$ is a map. It is said to be \emph{indeterministic} (or incomplete) if the valuation has the form $f_{a}:\, O\,\longmapsto \, \wp(V)$, where $V\,=\, \bigcup V_{a}$.
These two classes of information systems can be used to generate various types of relational, covering or relator spaces which in turn relate to approximations of different types and form a substantial part of the problems encountered in general rough set theories. One way of defining an indiscernibility relation $\sigma$ is as below:

For $x,\, y\,\in\, \mathbb{O} $ and $B\,\subseteq\, At $, $(x,\,y)\,\in\, \sigma $ if and only if $(\forall a\in B)\, \nu(a,\,x)\,=\, \nu (a,\, y) $. In this case $\sigma$ is an equivalence relation (see \cite{ZPB,BC1,du,BC2}). Lower and upper approximations, rough equalities are defined over it and topological algebraic semantics can be formulated over \emph{roughly equivalent} objects (or subsets of attributes) through extra operations. Duality theorems, proved for \emph{pre-rough} algebras defined in \cite{BC1}, are specifically for structures relation isomorphic to the approximation space $\left(\mathbb{O}, \sigma \right)$. This is also true of the representation results in \cite{AM3,du,IE2013}. But these are not for information systems - optimal concepts of \emph{isomorphic information systems} are considered by the present author in a forthcoming paper.

In fact in \cite{AM3}, it has been proved by the present author that 
\begin{theorem}
For every super rough algebra $S$, there exists an approximation space $X$ such that the super rough set algebra generated by $X$ is isomorphic to $S$.
\end{theorem}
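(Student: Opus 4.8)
The plan is to establish a Stone-type representation: from the abstract algebra $S$ I would construct a concrete approximation space $X = (U, E)$ and then exhibit an isomorphism between $S$ and the super rough set algebra $\mathcal{R}(X)$ that $X$ generates. The first step is to isolate the Boolean skeleton of $S$, namely the sub-poset of \emph{exact} elements --- those fixed by both the lower operator $L$ and the upper operator $U$. Using the (pre-)rough axioms this skeleton should form a Boolean algebra $B$, so by Stone duality $B$ is isomorphic to a field of subsets of the set $U_0$ of its prime filters (equivalently, in the atomic case, to the powerset of its atoms). The atoms of $B$ are precisely the algebraic images of the equivalence classes, so $U_0$ already carries a natural partition.

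Second, since a genuine boundary element requires the relevant class to be approximated strictly from below and strictly from above, I would \emph{refine} $U_0$ into $U$ by assigning to each atom a block of sufficiently many points (two suffices), letting $E$ be the equivalence whose classes are these blocks, and reserving singleton blocks for atoms that are forced to be exact. This guarantees that every abstract boundary can be witnessed by an actual rough set in $\mathcal{R}(X)$, rather than collapsing to an exact one.

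Third, I would define the representation map $\psi \colon S \longrightarrow \mathcal{R}(X)$ by sending $a$ to the rough object whose lower approximation is the union of blocks lying below $L a$ and whose upper approximation is the union of blocks lying below $U a$. The bulk of the argument is then the verification that $\psi$ is a homomorphism: that it commutes with join, meet, the De Morgan negation, the operators $L$ and $U$, and in addition with the extra operations that distinguish a \emph{super} rough algebra from a plain pre-rough one. Injectivity follows from the faithfulness of the Stone representation of $B$ together with the property that in these algebras $L$ and $U$ jointly determine each element (if $La = Lb$ and $Ua = Ub$ then $a = b$); surjectivity onto the \emph{generated} algebra is immediate, since every generator of $\mathcal{R}(X)$ is a $\psi$-image by construction.

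The main obstacle I anticipate lies in the interaction between the block-size assignment and the additional super-rough axioms. I must choose the multiplicities so that the approximation operators induced by $(U, E)$ reproduce $L$ and $U$ \emph{exactly}, and so that the super operations --- which encode finer invariants than bare pre-rough structure --- do not introduce elements of $\mathcal{R}(X)$ lying outside the image of $\psi$. Pinning down this exact correspondence, rather than performing the routine operation-by-operation homomorphism checks, is where the real difficulty is concentrated.
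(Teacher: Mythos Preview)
The paper does not contain a proof of this theorem at all: it is quoted in the Background section as a result established elsewhere, with the sentence ``In fact in \cite{AM3}, it has been proved by the present author that\ldots'' preceding the statement. There is therefore nothing in the present paper against which your proposal can be compared line by line.

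That said, your outline is the standard shape such representation arguments take, and it is broadly in the spirit of the pre-rough duality of \cite{BC1} (also cited here) that the author is extending. The Boolean skeleton of exact elements, the Stone-type passage to a set of atoms or prime filters, and the block-refinement to create genuine boundary regions are all the expected moves. The one place where your sketch is genuinely incomplete is exactly where you flag it: the ``super'' operations go beyond the pre-rough signature, and you have not said what they are or how the block multiplicities are to be tuned so that the concrete algebra $\mathcal{R}(X)$ does not acquire extra elements or fail to satisfy the additional identities. Without that, the proposal is a plan rather than a proof; whether the actual argument in \cite{AM3} handles this by a specific multiplicity assignment, by a quotient, or by a different device cannot be read off from the present paper.
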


In simple terms, \emph{granules} are the subsets (or objects) that generate approximations and \emph{granulations} are the collections of all such granules in the context. For more on what they might be the reader may refer to \cite{AM240,AM3930}. In this paper a variation of generalized granular operator spaces, introduced and studied by the present author in \cite{AM6999,AM9114,AM9699}, will serve as the primary framework for most considerations. For reference, related definitions are mentioned below.

\begin{definition}\label{gos}
A \emph{Granular Operator Space}\cite{AM6999} $S$ is a structure of the form $S\,=\, \left\langle \underline{S}, \mathcal{G}, l , u\right\rangle$ with $\underline{S}$ being a set, $\mathcal{G}$ an \emph{admissible granulation}(defined below) over $S$ and $l, u$ being operators $:\wp(\underline{S})\longmapsto \wp(\underline{S})$ ($\wp(\underline{S})$ denotes the power set of $\underline{S}$) satisfying the following ($\underline{S}$ is replaced with $S$ if clear from the context. \textsf{Lower and upper case alphabets may denote subsets} ):

\begin{align*}
a^l \subseteq a\,\&\,a^{ll} = a^l \,\&\, a^{u} \subseteq a^{uu}  \\
(a\subseteq b \longrightarrow a^l \subseteq b^l \,\&\,a^u \subseteq b^u)\\
\emptyset^l = \emptyset \,\&\,\emptyset^u = \emptyset \,\&\,\underline{S}^{l}\subseteq S \,\&\, \underline{S}^{u}\subseteq S.
\end{align*}

In the context of this definition, \emph{Admissible Granulations} are granulations $\mathcal{G}$ that satisfy the following three conditions ($t$ being a term operation formed from the set operations $\cup, \cap, ^c, 1, \emptyset$):

\begin{align*}
(\forall a \exists
b_{1},\ldots b_{r}\in \mathcal{G})\, t(b_{1},\,b_{2}, \ldots \,b_{r})=a^{l} \\
\tag{Weak RA, WRA} \mathrm{and}\: (\forall a)\,(\exists
b_{1},\,\ldots\,b_{r}\in \mathcal{G})\,t(b_{1},\,b_{2}, \ldots \,b_{r}) =
a^{u},\\
\tag{Lower Stability, LS}{(\forall b \in
\mathcal{G})(\forall {a\in \wp(\underline{S}) })\, ( b\subseteq a\,\longrightarrow\, b \subseteq a^{l}),}\\
\tag{Full Underlap, FU}{(\forall
a,\,b\in\mathcal{G})(\exists
z\in \wp(\underline{S}) )\, a\subset z,\,b \subset z\,\&\,z^{l} = z^{u} = z,}
\end{align*}
\end{definition}

\begin{flushleft}
\textbf{Remarks}: 
\end{flushleft}
\begin{itemize}
\item {The concept of admissible granulation was defined for \textsf{RYS} in \cite{AM240} using parthoods instead of set inclusion and relative to \textsf{RYS}, $\pc = \subseteq$, $\pp = \subset$. It should be noted that the minimal assumptions make this concept more general than the idea of granulation in the precision based granular computing paradigm (and complex granules) \cite{JS09,AM9501}. }
\item {\emph{The conditions defining admissible granulations mean that every approximation is somehow representable by granules in a set theoretic way, that granules are lower definite, and that all pairs of distinct granules are contained in definite objects}.}
\item {the term operation $t$ is intended to be defined over the power set Boolean algebra in standard algebraic sense (see \cite{AM960} for a detailed example).}
\end{itemize}

The concept of \emph{generalized granular operator spaces} has been introduced in \cite{AM9114,AM6900} as a proper generalization of that of granular operator spaces. The main difference is in the replacement of $\subset$ by arbitrary \emph{part of} ($\pc$) relations in the axioms of admissible granules and inclusion of $\pc$ in the signature of the structure.
\begin{definition}
A \emph{General Granular Operator Space} (\textsf{GSP}) $S$ is a structure of the form $S\,=\, \left\langle \underline{S}, \mathcal{G}, l , u, \pc \right\rangle$ with $\underline{S}$ being a set, $\mathcal{G}$ an \emph{admissible granulation}(defined below) over $S$, $l, u$ being operators $:\wp(\underline{S})\longmapsto \wp(\underline{S})$ and $\pc$ being a definable binary generalized transitive predicate (for parthood) on $\wp(\underline{S})$ satisfying the same conditions as in Def.\ref{gos} except for those on admissible granulations (Generalized transitivity can be any proper nontrivial generalization of parthood (see \cite{AM9501}). $\pp$ is  proper parthood (defined via $\pp ab$ iff $\pc ab \,\&\,\neg \pc ba$) and $t$ is a term operation formed from set operations):

\begin{align*}
(\forall x \exists
y_{1},\ldots y_{r}\in \mathcal{G})\, t(y_{1},\,y_{2}, \ldots \,y_{r})=x^{l} \\
\tag{Weak RA, WRA} \mathrm{and}\: (\forall x)\,(\exists
y_{1},\,\ldots\,y_{r}\in \mathcal{G})\,t(y_{1},\,y_{2}, \ldots \,y_{r}) =
x^{u},\\
\tag{Lower Stability, LS}{(\forall y \in
\mathcal{G})(\forall {x\in \wp(\underline{S}) })\, ( \pc yx\,\longrightarrow\, \pc yx^{l}),}\\
\tag{Full Underlap, FU}{(\forall
x,\,y\in\mathcal{G})(\exists
z\in \wp(\underline{S}) )\, \pp xz,\,\&\,\pp yz\,\&\,z^{l} = z^{u} = z,}
\end{align*}
\end{definition}

\subsection{Finite Posets}\label{wth}

Let $S$ be a finite poset with $\#(S) = n < \infty$. The following concepts and notations will be used in this paper:
\begin{itemize}
\item {If $\mathfrak{F}$ is a collection of subsets $\{X_i\}_{i\in J}$ of a set $X$, then a \emph{system of distinct representatives} \textsf{SDR} for $\mathfrak{F}$ is a set $\{x_i ; i\in J\}$ of distinct elements satisfying $(\forall i \in J) x_i \in X_i$. Chains are subsets of a poset in which any two elements are comparable. Singletons are both chains and antichains.}
\item {For $a, b\in S$, $a \prec b$ shall be an abbreviation for $b$ covering $a$ from above (that is $a < b$ and $(a\leq c \leq b \longrightarrow c= a \text{ or } c= b)$). $c(S)$ shall be the number of covering pairs in $S$.}
\item {A \emph{chain cover} of a finite poset $S$ is a collection $\mathcal{C}$ of chains in $S$ satisfying $\cup \mathcal{C} = S$. It is disjoint if the chains in the cover are pairwise disjoint.}
\item {$S$ has finite width $w$ if and only if it can be partitioned into $w$ number of chains, but not less.}
\end{itemize}

The following results are well known:
\begin{theorem}
\begin{enumerate}
\item {A collection of subsets $\mathfrak{F}$ of a finite set $S$ with $\#(\mathfrak{F})= r$ has an SDR if and only if for any $1 \leq k \leq r $,the union of any $k$ members of $\mathfrak{F}$ has size at least $k$, that is 
\[(\forall{X_1,\ldots , X_k \in \mathfrak{F}}) \, k \leq \#(\cup X_i).\] }
\item {Every finite poset $S$ has a disjoint chain cover of width $w = width(S)$.}
\item {If $X$ is a partially ordered set with longest chains of length $r$ and if it can be partitioned into $k$ number of antichains then $r\leq k$.}
\item {If $X$ is a finite poset with $k$ elements in its largest antichain, then a chain decomposition of $X$ must contain at least $k$ chains. }
\end{enumerate} 
\end{theorem}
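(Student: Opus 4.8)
The plan is to treat the four claims separately, since they correspond to classical results whose standard proofs are largely independent. For the first claim (Hall's condition for a system of distinct representatives) I would argue by induction on the number $r$ of sets in $\mathfrak{F}$. Necessity is immediate: if an \textsf{SDR} $\{x_i\}$ exists, then for any $k$ chosen sets the corresponding $k$ representatives are distinct and lie in the union, so that union has size at least $k$. For sufficiency I would split into two cases according to whether some proper subcollection is \emph{critical}. If every subcollection of $j < r$ sets has union of size strictly greater than $j$, pick any $x_r \in X_r$, delete it from the remaining sets, and observe that Hall's condition still holds for the $r-1$ reduced sets; induction then finishes. If instead some subcollection of $j$ sets (with $1 \le j < r$) has union $U$ of size exactly $j$, apply the inductive hypothesis to these to obtain an \textsf{SDR} exhausting $U$, delete $U$ from the other $r-j$ sets, verify (by recombining with the critical block) that the reduced family again satisfies Hall's condition, and apply induction once more to obtain the remaining representatives disjoint from $U$.

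For the second claim I would invoke the argument of Dilworth's theorem: taking $w$ to be the maximum size of an antichain, one shows by induction on $\#(S)$ that $S$ admits a partition into exactly $w$ chains. The inductive step removes a maximal element (or a suitable maximal chain), splits on whether every maximum antichain of the reduced poset can be extended, and uses the first claim to match chains across a maximum antichain — this interplay is precisely what makes the argument nontrivial. Since a disjoint chain cover is exactly a chain partition, the width as defined here (the least number of chains in a partition) coincides with this $w$, which yields the asserted disjoint cover.

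The remaining two claims are the easy pigeonhole halves. For the third claim, fix a partition of $X$ into $k$ antichains together with a chain $C$ of length $r$; because a chain and an antichain share at most one element, each of the $r$ elements of $C$ must lie in a distinct block of the partition, forcing $r \le k$. For the fourth claim, fix a largest antichain $A$ with $\#(A) = k$ and any chain decomposition; again each chain contains at most one element of $A$, so at least $k$ chains are needed to cover the $k$ elements of $A$.

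The main obstacle is the second claim: the first two claims (Hall and Dilworth) carry the real content, and in particular the Dilworth partition requires the delicate case analysis above and leans on the first claim, whereas the third and fourth claims are one-line counting arguments. I would therefore organize the write-up so that the \textsf{SDR} criterion is established first and then reused inside the proof of the chain partition.
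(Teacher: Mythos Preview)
Your proposal is correct. The paper itself does not give full proofs: it refers the reader to standard references for all four assertions and offers only one-line hints for the third and fourth. Where there is something to compare, your route differs from those hints. For the third assertion the paper suggests ``start from a chain decomposition and recursively extract the minimal elements from it to form $r$ number of antichains,'' which is really an argument for the \emph{converse} (Mirsky's theorem, that one can always partition into $r$ antichains); your pigeonhole argument---a chain of length $r$ meets each block of an antichain partition in at most one point---is the direct proof of the inequality $r\le k$ as stated. Similarly, for the fourth assertion the paper points to ``induction on the size of $X$ across many possibilities,'' which is the standard attack on the hard Dilworth direction, whereas the claim as written is again the easy pigeonhole half that you supply. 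Your treatment of the second assertion as the nontrivial Dilworth partition (and your reuse of Hall's theorem inside it) is more than is strictly needed here, since the paper \emph{defines} width as the minimum size of a chain partition, making the existence of a disjoint chain cover of size $w$ essentially definitional for finite $S$; proving Dilworth does no harm, however, and it supplies the identification of width with maximum antichain size that the paper implicitly relies on later.
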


Proofs of the assertions can be found in \cite{JGC2009,DP2002} for example. To prove the third, start from a chain decomposition and recursively extract the minimal elements from it to form $r$ number of antichains. The fourth assertion is proved by induction on the size of $X$ across many possibilities.

\section{Semantic Framework}

It is more convenient to use only sets and subsets in the formalism as these are the kinds of objects that may be observed by agents and such a formalism would be more suited for reformulation in formal languages. This justifies the severe variation defined below in stages:

\begin{definition}\label{rosp}
A \emph{Higher Rough Operator Space} $\mathbb{S}$ shall be a structure of the form $\mathbb{S}\,=\, \left\langle \underline{\mathbb{S}}, l , u, \leq , \bot, \top \right\rangle$ with $\underline{\mathbb{S}}$ being a set,  and $l, u$ being operators $:\underline{\mathbb{S}}\longmapsto \underline{\mathbb{S}}$ satisfying the following ($\underline{\mathbb{S}}$ is replaced with $\mathbb{S}$ if clear from the context. ):

\begin{align*}
(\forall a \in \mathbb{S})\, a^l \leq a\,\&\,a^{ll} = a^l \,\&\, a^{u}  \leq  a^{uu}  \\
(\forall a, b \in \mathbb{S}) (a\leq b \longrightarrow a^l \leq b^l \,\&\,a^u \leq b^u)\\
 \bot^l =  \bot \,\&\, \bot^u =  \bot \,\&\, \top^{l}\leq \top \,\&\,  \top^{u}\leq \top \\
 (\forall a \in \mathbb{S})\, \bot \leq a \leq \top\\
 \mathbb{S} \text{ is a bounded poset}.
\end{align*}
\end{definition}

\begin{definition}\label{hos}
A \emph{Higher Granular Operator Space} (\textsf{SHG}) $\mathbb{S}$ shall be a structure of the form $\mathbb{S}\,=\, \left\langle \underline{\mathbb{S}}, \mathcal{G}, l , u, \leq, \vee, \wedge, \bot, \top \right\rangle$ with $\underline{\mathbb{S}}$ being a set, $\mathcal{G}$ an \emph{admissible granulation}(defined below) for $\mathbb{S}$ and $l, u$ being operators $:\underline{\mathbb{S}}\longmapsto \underline{\mathbb{S}}$ satisfying the following ($\underline{\mathbb{S}}$ is replaced with $\mathbb{S}$ if clear from the context. ):

\begin{align*}
 (\mathbb{S}, \vee, \wedge, \bot, \top )\text{ is a bounded lattice}\\
 \leq \text{ is the lattice order} \\
(\forall a \in \mathbb{S})\, a^l \leq a\,\&\,a^{ll} = a^l \,\&\, a^{u}  \leq  a^{uu}  \\
(\forall a, b \in \mathbb{S}) (a\leq b \longrightarrow a^l \leq b^l \,\&\,a^u \leq b^u)\\
 \bot^l =  \bot \,\&\, \bot^u =  \bot \,\&\, \top^{l}\leq \top \,\&\,  \top^{u}\leq \top \\
 (\forall a \in \mathbb{S})\, \bot \leq a \leq \top
\end{align*}

$\pc ab$ if and only if $a \leq b$ in the following three conditions. Further $\pp$ is  proper parthood (defined via $\pp ab$ iff $\pc ab \,\&\,\neg \pc ba$) and $t$ is a term operation formed from the lattice operations):

\begin{align*}
(\forall x \exists y_{1},\ldots y_{r}\in \mathcal{G})\, t(y_{1},\,y_{2}, \ldots \,y_{r})=x^{l} \\
\tag{Weak RA, WRA} \mathrm{and}\: (\forall x)\,(\exists
y_{1},\,\ldots\,y_{r}\in \mathcal{G})\,t(y_{1},\,y_{2}, \ldots \,y_{r}) =
x^{u},\\
\tag{Lower Stability, LS}{(\forall y \in \mathcal{G})(\forall x\in \underline{\mathbb{S}})\, (\pc yx\,\longrightarrow\, \pc yx^{l}),}\\
\tag{Full Underlap, FU}{(\forall x,\,y\in\mathcal{G})(\exists z\in \underline{\mathbb{S}} )\, \pp xz,\,\&\,\pp yz\,\&\,z^{l} = z^{u} = z}
\end{align*}
\end{definition}

\begin{definition}
An element $x\in\mathbb{S}$ will be said to be \emph{lower definite} (resp. \emph{upper definite}) if and only if $x^l = x$ (resp. $x^u = x$) and \emph{definite}, when it is both lower and upper definite. $x\in \mathbb{S}$ will also be said to be \emph{weakly upper definite} (resp \emph{weakly definite}) if and only if $ x^u = x^{uu} $ (resp $ x^u = x^{uu} \,\&\, x^l =x$ ). Any one of these five concepts may be chosen as a concept of \emph{crispness}. 
\end{definition}

The following concepts of \emph{rough objects} have been either considered in the literature (see \cite{AM240}) or are reasonable concepts:
\begin{itemize}
\item {$x\in \mathbb{S}$ is a lower rough object if and only if $\neg (x^l = x) $. }
\item {$x\in \mathbb{S}$ is a upper rough object if and only if $\neg (x = x^u) $. }
\item {$x\in \mathbb{S}$ is a weakly upper rough object if and only if $\neg (x^u = x^{uu}) $. }
\item {$x\in \mathbb{S}$ is a rough object if and only if $\neg (x^l = x^u) $. }
\item {\emph{Any pair of definite elements} of the form $(a , b)$ satisfying $a < b $}
\item {\emph{Any distinct pair of elements} of the form $(x^l ,x^u)$.}
\item {Elements in an \emph{interval of the form} $(x^l, x^u)$.}
\item {Elements in an \emph{interval of the form} $(a, b)$ satisfying $a\leq b$ with $a, b$ being definite elements.}
\item {A \emph{non-definite element in a RYS}(see \cite{AM240}), that is an $x$ satisfying $\neg \pc x^u x^l   $}
\end{itemize}

All of the above concepts of a rough object except for the last are directly usable in a \shg. Importantly, \emph{most of the results proved in this paper can hold for many choices of concepts of roughness and crispness. The reader is free to choose suitable combinations from the $40$ possibilities}. 

\begin{example}[No Information Tables]
It should be easy to see that most examples of general rough sets derived from information tables (and involving granules and granulations) can be read as \shg . So a nontrivial example of a \shg that has not been derived from an information system is presented below:

Suppose agent X wants to complete a task and this task is likely to involve the use of a number of tools. X thinks tool-1 \textsf{suffices for} the task that a tool-2 \textsf{is not suited for} the purpose and that tool-3 is \textsf{better suited than} tool-1 for the same task. X also believes that tool-4 \textsf{is as suitable as} tool-1 for the task and that tool-5 \textsf{provides more than what is necessary for} the task.  X thinks similarly about other tools  but not much is known about the consistency of the information. X has a large repository of tools and
limited knowledge about tools and their suitability for different purposes, and at the same time X might be knowing more about difficulty of tasks that in turn require better tools of different kinds. 

Suppose also that similar heuristics are available about other similar tasks. The reasoning of the agent in the situation can be recast in terms of lower, upper approximations and generalized equality and questions of interest include those relating to the agent's understanding of the features of tools, their appropriate usage contexts and whether the person thinks rationally.

To see this it should be noted that the key predicates in the context are as below:
\begin{itemize}
\item {\textsf{suffices for} can be read as \emph{includes potential lower approximation of} a right tool for the task. }
\item {\textsf{is not suited for} can be read as \emph{is neither a lower or upper approximation of} any of the right tools for the task. }
\item {\textsf{better suited than} can be read as \emph{potential rough inclusion },}
\item {\textsf{is as suitable as} can be read as  \emph{potential rough equality} and}
\item {\textsf{provides more than what is necessary for} is for \emph{upper approximation} of a right tool for the task. }
\end{itemize} 
\end{example}
\begin{example}[Number of Objects]
Often in the design, implementation and analysis of surveys (in the social sciences in particular), a number of intrusive assumptions on the sample are done and preconceived ideas about the population may influence survey design. Some assumptions that ensure that the sample is representative are obviously good, but as statistical methods are often abused \cite{MHRL} a minimal approach can help in preventing errors. The idea of samples being representative translates into number of non crisp objects being at least above a certain number and below a certain number. There are also situations (as when prior information is not available or ideas of representative samples are unclear) when such bounds may not be definable or of limited interest.
\end{example}
\begin{example}[Non-Rough Approximations]
Suppose $X_1, \ldots X_{24}$ are $24$ colors defined by distinct frequencies and suppose the weak sensors at disposal can identify $3$ of them as crisp colors. If it is required that the other $21$ colors be approximated as $9$ rough objects, then such a classification would not be possible in a rough scheme of things as at most three distinct pairs of crisp objects are possible. Note that using intervals of frequencies, tolerances can be defined on the set. But under the numeric restriction, $9$ rough objects would not be possible.  
\end{example}

\subsection{Minimal Assumptions}\label{sf}

For the considerations of the following sections on distribution of rough objects and on counting to be valid, a minimal set of assumptions are necessary. These will be followed unless indicated otherwise:

\begin{description}
\item[F1]{$\mathbb{S} \text{ is a \shg }$.}
\item[F2]{$ \# (\mathbb{S}) = n < \infty .$}
\item[C1]{$ C \subseteq \mathbb{S} \text{ is the set of crisp objects}.$}
\item[C2]{$ \# (C) = k .$}
\item[R1]{$ R \subset \mathbb{S} \text{ is the set of rough objects not necessarily defined as in the above}.$}
\item[R2]{$ R \cup C = \mathbb{S}$}
\item[R3]{$ \text{ there exists a map } \varphi : R \longmapsto C^{2} .$}
\item[RC1]{$ R\cap C = \emptyset .$}
\item[RC2]{$ (\forall x\in R)(\exists a, b\in C) \varphi (x) = (a, b) \,\& \, a\subset b.$} 
\end{description}

Note that no further assumptions are made about the nature of $\varphi(x)$. It is not required that 
$\varphi (x) = (a, b) \, \&\, x^l = a \, \&\, x^u = b$, though this happens often. 

The set of crisp objects is necessarily partially ordered. In specific cases, this order may be a lattice, distributive, relatively complemented or Boolean order. Naturally the combinatorial features associated with \shg depend on the nature of the partial order. This results in situations that are way more involved than the situation encoded by the following simple proposition.

\begin{proposition}
Under all of the above assumptions, for a fixed value of $\# (\mathbb{S}) = n $ and $\#(C)=k$, $R$ must be representable by a finite subset $K \subseteq C^2 \setminus \Delta_C$, $\Delta_C$ being the diagonal in $C^2$.   
\end{proposition}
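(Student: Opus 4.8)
The plan is to take the representation map $\varphi$ furnished by assumption \textbf{R3} and show that its image already is the desired set $K$. Concretely, I would define $K = \varphi(R) = \{\varphi(x) \,:\, x \in R\}$ and then verify two properties: that $K$ lies in $C^2 \setminus \Delta_C$, and that $K$ is finite. Since \textbf{R3} gives $\varphi : R \longmapsto C^2$, the containment $K \subseteq C^2$ is immediate, so the work reduces to these two displayed claims.

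For the off-diagonal containment, I would argue pointwise. Fix an arbitrary $x \in R$. By \textbf{RC2} there are $a, b \in C$ with $\varphi(x) = (a, b)$ and $a \subset b$. Because the containment here is \emph{proper}, we have $a \neq b$, and therefore $(a, b) \notin \Delta_C$. As $x$ ranges over all of $R$, this shows $K \cap \Delta_C = \emptyset$, whence $K \subseteq C^2 \setminus \Delta_C$. Finiteness is then a counting observation using \textbf{C2}: from $\#(C) = k$ we get $\#(C^2) = k^2$ and $\#(\Delta_C) = k$, so $\#(C^2 \setminus \Delta_C) = k(k-1)$; as a subset, $K$ satisfies $\#(K) \leq k(k-1) < \infty$. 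The representation of $R$ is then carried by $\varphi$ itself, each rough object $x$ being coded by the pair $\varphi(x) \in K$.

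I do not expect a genuine obstacle here, as the statement unwinds directly from the minimal assumptions; the entire content sits in noticing that \textbf{RC2} imposes \emph{proper} containment $a \subset b$ rather than mere $a \subseteq b$. That strictness is exactly what pushes the image off the diagonal; without it the conclusion $K \subseteq C^2 \setminus \Delta_C$ would fail. It is worth remarking that \textbf{RC2} in fact confines $K$ to the comparable, strictly-ordered pairs of $C$, a possibly much smaller set than $C^2 \setminus \Delta_C$, so the proposition records a loose (though sufficient) bound; the sharper count would depend on the order structure of $C$, which the later sections presumably exploit.
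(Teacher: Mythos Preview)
Your proof is correct; the paper itself states this proposition without proof, treating it as an immediate consequence of the minimal assumptions, and your argument is exactly the natural unpacking of \textbf{R3} together with the strict inclusion in \textbf{RC2}. Your closing remark that \textbf{RC2} actually confines $K$ to the strictly ordered pairs is also apt and anticipates the sharper counts the paper develops in the chain cases.
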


The two most extreme cases of the ordering of the set $C$ of crisp objects correspond to $C$ forming a chain and $C\setminus \{\bot,\top \}$ forming an anti-chain. Numeric measures for these distributions have been defined for these in \cite{AM9114} by the present author. The measure gives an idea of the extent of distribution of non crisp objects over the distribution of the crisp objects and it has also been shown that such measures do not provide reasonable comparisons across diverse contexts.

\section{Pre-Well Distribution of Objects over Chains: PWC}

\begin{definition}
A distribution of rough objects relative to a chain of crisp objects $C$ will be said to be a \emph{Pre-Well Distribution of Objects over Chains} if the minimal assumptions (Subsection.\ref{sf}) (without the condition \textsf{RC1}) and the following three 
conditions hold: 

\begin{enumerate}
 \item {$C$ forms a chain under inclusion order.}
 \item {$\varphi$ is a surjection.}
 \item {Pairs of the form $(x, x)$, with $x$ being a crisp object, also correspond to rough objects.}
\end{enumerate}
\end{definition}

Though the variant is intended as an abstract reference case where the idea of crispness is expressed subliminally, there are very relevant practical contexts for it (see Example. \ref{exa1}). It should also be noted that this interpretation is not compatible with the interval way of representing rough objects without additional tweaking. 

\begin{theorem}
Under the above assumptions, the number of crisp objects is related to the total number of objects by the formula: \[k \stackrel{i}{=} \dfrac{(1+4n)^\frac{1}{2} -1 }{2}.\] 
In the formula $\stackrel{i}{=}$ is to be read as \emph{if the right hand side (RHS) is an integer then the left hand side is the same as RHS.}  
\end{theorem}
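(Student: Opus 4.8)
The plan is to reduce the statement to a count of the fibres of $\varphi$ and then to solve a quadratic in $k$. First I would establish that, under the \textbf{PWC} conditions, the map $\varphi : R \longmapsto C^{2}$ is a bijection. Surjectivity is immediate from the second \textbf{PWC} condition. For injectivity I would invoke the Proposition, which guarantees that every rough object is representable by a distinct element of $C^{2}$; the only subtlety is the diagonal $\Delta_C$, and this is exactly what the third \textbf{PWC} condition supplies, since it declares each pair $(x,x)$ with $x$ crisp to correspond to a rough object. Suppressing \textsf{RC1} is what makes room for these diagonal correspondences, so that the representing set is the whole of $C^{2}$ rather than $C^{2}\setminus \Delta_C$. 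Consequently $\#(R) = \#(C^{2}) = k^{2}$.

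Next I would count $\mathbb{S}$ itself. Using \textsf{R2}, namely $R \cup C = \mathbb{S}$, together with the fact that the $k$ crisp objects are enumerated as a layer separate from the rough objects, I would obtain $n = \#(R) + \#(C) = k^{2} + k$. Rearranging as the quadratic $k^{2} + k - n = 0$ and discarding the negative root (inadmissible for a cardinality) yields $k = \tfrac{(1+4n)^{1/2} - 1}{2}$. Since $k$ must be a positive integer while $n$ is merely prescribed, this equality can be asserted only when the right-hand side is itself an integer; this is precisely the content of the relation $\stackrel{i}{=}$, and it closes the argument.

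The hard part will be the bookkeeping of $\#(R \cup C)$. One must be careful about whether the objects indexing the diagonal pairs are identified with the crisp objects — which would deflate the count by $k$ — or are counted as genuinely distinct rough objects. Pinning down the exact reading of the \textbf{PWC} axioms, in particular the interaction between the relaxed \textsf{RC2}, the surjectivity clause, and the suppression of \textsf{RC1}, is what decides whether the total is $k^{2}+k$ rather than $\binom{k+1}{2}$ or $k^{2}$, and hence whether the stated formula (with $(1+4n)^{1/2}$) is the correct one. I would therefore devote most care to verifying that $\varphi$ lands bijectively on all of $C^{2}$ and that the crisp layer genuinely contributes an additional $k$ to the total, since everything downstream is a routine solution of the quadratic.
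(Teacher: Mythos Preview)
Your overall route coincides with the paper's: obtain $n-k = k^{2}$ (equivalently $n = k^{2}+k$) and solve the quadratic for $k$, keeping the positive root and imposing integrality via $\stackrel{i}{=}$. The paper's argument is exactly these four lines, with ``the number of rough objects is $n-k$'' followed by ``by the nature of the surjection $n-k$ maps to $k^{2}$ pairs'', and then the formula.

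One caveat on your plan: the Proposition you invoke for injectivity does not supply it. That Proposition is stated under the full list of minimal assumptions (including \textsf{RC1} and \textsf{RC2}) and only asserts that $R$ is representable by some subset $K\subseteq C^{2}\setminus\Delta_{C}$; it neither claims that distinct rough objects receive distinct representatives nor is it applicable once \textsf{RC1} is suppressed and the diagonal is admitted. The paper itself does not argue injectivity at all --- it passes directly from ``$\varphi$ is a surjection onto $C^{2}$'' to the equality $n-k = k^{2}$ --- so the step you flag as ``the hard part'' is simply taken for granted there. Your instinct that the bookkeeping of $R\cap C$ and the bijectivity of $\varphi$ are the real content is sound, but do not expect the Proposition to close that gap; relative to the paper you need only assert the count as it does.
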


\begin{proof}
\begin{itemize}
\item {Clearly the number of rough objects is $n - k$ .}
\item {By the nature of the surjection $n - k$ maps to $k^2$  pairs of crisp objects.}
\item {So $n - k = k^2$.}
\item {So integral values of $\dfrac{(1+4n)^\frac{1}{2} -1 }{2}$ will work.}  
\end{itemize}
\qed
\end{proof}

This result is associated with the distribution of odd square integers of the form $4n +1$ which in turn should necessarily be of the form $4(p^2 +p) +1$ (p being any integer). The requirement that these be perfect squares causes the distribution of crisp objects to be very sparse with increasing values of $n$.  The number of rough objects between two successive crisp objects increases in a linear way, but this is a misleading aspect. These are illustrated in the graphs \textsf{Fig}.\ref{fig1} and \textsf{Fig}.\ref{fig2}.

\begin{figure*}[htb]
\centering
\includegraphics[width=11.7cm]{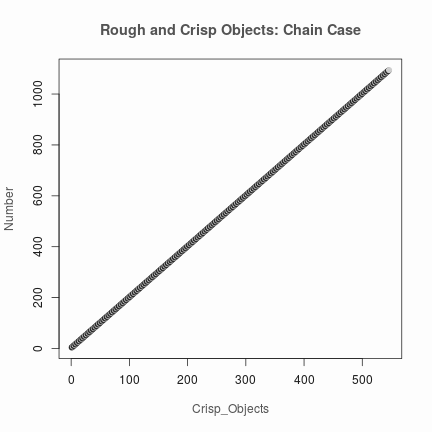}
\caption{Rough Objects Between Crisp Objects: Special Chain Case}\label{fig1}
\end{figure*}

\begin{figure*}[htb]
\centering
\includegraphics[width=11.7cm]{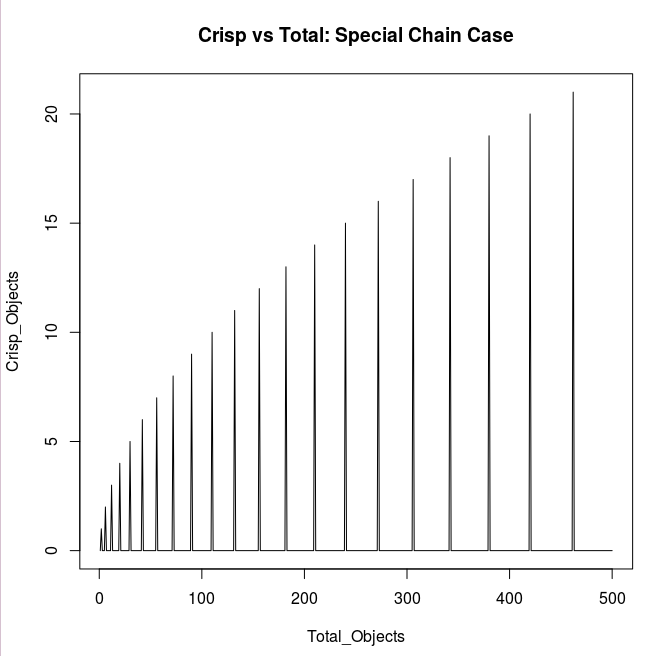}
\caption{Values of n and k: Special Chain Case}\label{fig2}
\end{figure*}

\begin{example}\label{exa1}
This example has the form of a narrative that gets progressively complex.

Suppose Alice wants to purchase a laptop from an on line store for electronics. Then she is likely to be confronted by a large number of models and offers from different manufacturers and sellers. Suppose also that the she is willing to spend less than \EURdig $x$ and is pretty open to considering a number of models. This can happen, for example, when she is just looking for a laptop with enough computing power for her programming tasks.   

This situation may appear to have originated from information tables with complex rules in columns for decisions and preferences. Such tables are not information systems in the proper sense. Computing power for one thing is a context dependent function of CPU cache memories, number of cores, CPU frequency, RAM, architecture of chipset, and other factors like type of hard disk storage. 

\begin{proposition}
The set of laptops $\mathbb{S}$ that are priced less than \EURdig $x$ can be totally quasi ordered. 
\end{proposition}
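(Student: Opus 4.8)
The plan is to exhibit the quasi-order explicitly as the pullback of a total order along a task-relative performance valuation, rather than to construct one by abstract nonsense. First I would fix the programming task Alice has in mind and introduce a \emph{computing-power valuation} $\pi : \mathbb{S} \longmapsto L$, where $L$ is a totally ordered set (one may take $L = \mbR$, or a finite chain of performance grades), and $\pi$ aggregates the relevant hardware parameters --- cache, number of cores, clock frequency, RAM, chipset architecture, storage type --- into a single scalar measuring adequacy for that task. The point of passing to $\pi$ is that, once the task is fixed, ``enough computing power'' becomes a one-dimensional adequacy criterion even though the underlying specification is multidimensional.

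Next I would define the relation $\preceq$ on $\mathbb{S}$ by $a \preceq b$ if and only if $\pi(a) \le \pi(b)$ in $L$, and verify the three defining properties of a total quasi-order by transporting them across $\pi$: reflexivity and transitivity follow from reflexivity and transitivity of the order on $L$, while totality follows from the fact that $L$ is totally ordered, so any two values $\pi(a), \pi(b)$ are comparable. Hence $\preceq$ is reflexive, transitive and total.

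I would then stress the single genuinely informative point, namely \emph{why the relation is merely a quasi-order and not a partial order}: two distinct laptops whose profiles are equivalent for the fixed task satisfy $\pi(a) = \pi(b)$, so that $a \preceq b$ and $b \preceq a$ with $a \neq b$, and antisymmetry fails. The induced equivalence $a \approx b$ exactly when $\pi(a) = \pi(b)$ partitions $\mathbb{S}$ into performance classes, and $\preceq$ descends to a genuine total order on the quotient $\mathbb{S}/\!\approx$; this is precisely the structure of a total quasi-order.

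The main obstacle is not any of the routine verifications but the \emph{existence and well-definedness of the scalar valuation} $\pi$: a priori the hardware parameters form a multidimensional, context-dependent vector, and an arbitrary componentwise comparison yields only a partial order. The crux of the argument is therefore to justify that, relative to a \emph{single} fixed task, these incomparable dimensions collapse onto one adequacy scale --- either because the task determines an aggregation (a benchmark or utility function) or because, by finiteness of $\mathbb{S}$ (assumption \textsf{F2}) together with the price bound below \EURdig $x$, the finitely many realized performance profiles can always be linearly ordered by the agent's preference. Once this collapse is secured, the proposition follows immediately from the preceding two steps.
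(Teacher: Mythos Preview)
Your argument is correct: pulling back a total order along any map $\pi:\mathbb{S}\to L$ yields a total quasi-order, and you correctly identify the failure of antisymmetry as the only nontrivial point. However, the route you take is considerably more involved than the paper's. The paper does not construct a computing-power valuation at all; it simply uses \emph{price}, which is already a scalar attribute explicitly present in the setup (every laptop in $\mathbb{S}$ is priced below \EURdig $x$). Defining $a\prec b$ iff the price of $a$ is at most the price of $b$ gives reflexivity and transitivity immediately, totality because any two real numbers are comparable, and failure of antisymmetry because two distinct laptops can share the same price.

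What your approach buys is an explicit statement of the general mechanism (any map into a chain induces a total quasi-order), and an ordering that is arguably more relevant to Alice's actual decision problem. What it costs is the entire final paragraph: you have to argue for the existence and well-definedness of the aggregation $\pi$, a step you yourself flag as the main obstacle and which is not fully resolved (the appeal to finiteness plus ``the agent's preference'' comes close to assuming what is to be proved). The paper sidesteps this obstacle entirely by choosing a valuation that is already scalar and requires no justification. In short, you proved a harder version of the proposition than was needed; the paper's choice of price makes the proof a two-line observation.
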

\begin{proof}
Suppose $\prec$ is the relation defined according to $a \prec b$ if and only if price of laptop $a$ is less than or equal to that of laptop $b$. Then it is easy to see that $\prec$ is a reflexive and transitive relation.
If two different laptops $a$ and $b$ have the same price, then $a \prec b$ and $b\prec a$ would hold.
So $\prec$ may not be antisymmetric.
\qed
\end{proof}

Suppose that under an additional constraint like CPU brand preference, the set of laptops becomes totally ordered. That is under a revised definition of $\prec$ of the form: $a \prec b$ if and only if price of laptop $a$ is less than that of laptop $b$ and if the prices are equal then CPU brand of $b$ must be preferred over $a$'s.

Suppose now that Alice has more knowledge about a subset $C$ of models in the set of laptops $\mathbb{S}$. Let these be labeled as \emph{crisp} and let the order on $C$ be $\prec_{|C}$. Using additional criteria, rough objects can be indicated. Though lower and upper approximations can be defined in the scenario, the granulations actually used are harder to arrive at without all the gory details.

This example once again shows that granulation and construction of approximations from granules may not be related to the construction of approximations from properties in a cumulative way. 
\end{example}

\section{Well Distribution of Objects Over Chains: WDC}
\begin{definition}
A distribution of rough objects relative to a chain of crisp objects $C$ will be said to be a \emph{Well Distribution of Objects over Chains} if the minimal assumptions (Subsection.\ref{sf}) and the following two conditions hold: 
\begin{enumerate}
 \item {$C$ forms a chain under $\prec$ order.}
 \item {$\varphi$ is an surjection onto $C^2 \setminus \Delta_C$ ($\Delta_C$ being the diagonal of $C$).}
\end{enumerate}
\end{definition}

In this case pairs of the form $(a, a)$ (with $a$ being crisp) are not permitted to be regarded as rough objects. This amounts to requiring clearer conditions on the idea of what rough objects ought to be. 

\begin{example}\label{exa3}
In the example for pre-well distributions, if Alice never let a crisp object be a rough object, then the resulting example would fall under well distribution of objects over chains. In other words, the laptops would be well distributed over the crisp objects (crisp models of laptops).
\end{example}

\begin{theorem}
When the objects are well distributed over the crisp objects, then the number of crisp objects would be related to the total number of objects by the formula: \[n - k = k^2 - k\] 
So, it is necessary that $n$ be a perfect square
\end{theorem}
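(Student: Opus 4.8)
The plan is to mirror the argument of the preceding theorem (the PWC case) but with the corrected count of target pairs. First I would observe, exactly as before, that under the minimal assumptions together with condition \textsf{RC1} (which now holds, so $R$ and $C$ are disjoint), the number of rough objects is precisely $n - k$, since $R \cup C = \mathbb{S}$ and the union is disjoint. This is the same opening move as in the PWC theorem.

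Next I would count the size of the target set of $\varphi$. Here the crucial difference from the pre-well case is that pairs of the form $(a,a)$ on the diagonal $\Delta_C$ are explicitly \emph{not} permitted to represent rough objects, so $\varphi$ is a surjection onto $C^2 \setminus \Delta_C$ rather than onto all of $C^2$. Since $\#(C) = k$, we have $\#(C^2) = k^2$ and $\#(\Delta_C) = k$, whence $\#(C^2 \setminus \Delta_C) = k^2 - k$. Because $\varphi$ is asserted to be a surjection onto this set, every one of these $k^2 - k$ pairs is hit, and I would argue (as the companion theorem tacitly does) that the counting identifies the number of rough objects with the number of available target pairs, giving
\[
n - k = k^2 - k.
\]

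Finally I would extract the arithmetic consequence. The equation $n - k = k^2 - k$ simplifies immediately, the $-k$ terms cancelling on both sides, to $n = k^2$. Hence $n$ must be a perfect square, and moreover $k = \sqrt{n}$ whenever a distribution of this type exists. This is the closing claim of the statement.

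The main obstacle, as in the PWC theorem, is not the algebra but the justification that the count of rough objects equals the count of target pairs rather than merely being bounded by it. A surjection from $R$ onto $C^2 \setminus \Delta_C$ only forces $\#(R) \geq k^2 - k$ in general; to get the exact equality $n - k = k^2 - k$ one needs the pairing to be effectively bijective, i.e.\ that distinct rough objects correspond to distinct pairs. I would handle this by appealing to the intended reading of the framework (in which each rough object is represented by a unique pair via Proposition giving $K \subseteq C^2 \setminus \Delta_C$), treating $\varphi$ as the representing injection-surjection so that $R$ is in bijection with $C^2 \setminus \Delta_C$; this is the same implicit convention used to derive $n - k = k^2$ in the pre-well case, and under it the equality---and thus the perfect-square conclusion---follows directly.
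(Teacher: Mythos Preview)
Your proposal is correct and follows essentially the same route as the paper: disjointness of $R$ and $C$ gives $\#(R)=n-k$, the target of the surjection $\varphi$ is $C^{2}\setminus\Delta_{C}$ of size $k^{2}-k$, whence $n-k=k^{2}-k$ and $n=k^{2}$. Your explicit discussion of why the surjection yields an equality rather than merely $n-k\geq k^{2}-k$ is in fact more careful than the paper's own proof, which simply asserts the equality ``by the nature of the surjection'' without further comment.
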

\begin{proof}
\begin{itemize}
\item {Under the assumptions, an object is either rough or is crisp.}
\item {The number of rough objects is $n - k$ .}
\item {By the nature of the surjection $n - k$ maps to $k^2 -k$  pairs of crisp objects (as the diagonal cannot represent rough objects).}
\item {So $n - k = k^2 - k$.}
\item {So $n= k^2$ is necessary.}  
\end{itemize}
\qed
\end{proof}

\begin{theorem}
Under the assumptions of this section, if the \shg is a Boolean algebra then the cardinality of the Boolean algebra $2^x$ is determined by integral solutions for $x$ in \[ 2^x = k^2 .\]  
\end{theorem}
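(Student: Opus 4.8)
The plan is to combine the counting identity established in the preceding Well Distribution theorem with the classical structure theory of finite Boolean algebras. By that theorem, the well-distribution hypotheses already force $n = k^2$, where $n = \#(\mathbb{S})$ is the total number of objects and $k = \#(C)$ is the number of crisp objects. So the only additional ingredient is a determination of $n$ from the extra assumption that $\mathbb{S}$ carries a Boolean algebra structure; everything else is inherited from the standing WDC setup.

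First I would invoke the representation theorem for finite Boolean algebras: every finite Boolean algebra is isomorphic to the power set of its set of atoms. Hence, if $\mathbb{S}$ has exactly $x$ atoms, then $\#(\mathbb{S}) = 2^x$, i.e. $n = 2^x$. Substituting this into $n = k^2$ gives immediately
\[ 2^x = k^2, \]
which is the asserted equation. The integral solutions are then read off directly: since $k$ is a positive integer, the right-hand side is a perfect square, so $2^x$ must be a perfect square, forcing $x$ to be even; writing $x = 2m$ yields $k = 2^m$, and conversely each even $x$ produces a formally admissible pair $(x,k) = (2m, 2^m)$.

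The step I expect to require the most care is not the algebra but the compatibility check against the standing WDC hypotheses. Well distribution requires $C$ to form a chain under $\prec$, and a chain inside $2^x$ has at most $x+1$ elements (one per rank from $\bot$ to $\top$), so $k \leq x+1$ must hold in addition to $k = 2^m$ with $x = 2m$. Since $2^m$ eventually dominates $2m+1$, this secondary inequality eliminates all but the small solutions $m \in \{0,1,2\}$, equivalently $x \in \{0,2,4\}$ with $k \in \{1,2,4\}$. I would therefore state the bare equation $2^x = k^2$ as the content of the theorem and append the chain-length bound as a remark, since it sharpens the conclusion and makes precise why realizable Boolean cardinalities are so sparse in this framework.
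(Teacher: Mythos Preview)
Your core argument is exactly the paper's: combine the WDC identity $n=k^{2}$ from the preceding theorem with the fact that a finite Boolean algebra has cardinality $2^{x}$, and read off $2^{x}=k^{2}$ (the paper phrases the last step as $x = 2\log_{2} k$). That is all the paper proves here.

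Your closing paragraph on the chain-length bound $k \le x+1$ is additional analysis that the paper does not carry out. It is a legitimate observation under the standing WDC hypothesis that $C$ is a chain in the lattice $\mathbb{S}$, and it does sharpen the conclusion considerably. Be aware, though, that the paper's own follow-up remark (``for $n\le 10^{8}$, the total number of models is $27$'') is not compatible with your restriction to $x\in\{0,2,4\}$, so either the paper is silently relaxing the chain requirement on $C$ at that point or is counting something different; if you keep your remark, you should flag this discrepancy rather than present the bound as an uncontroversial refinement.
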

\begin{proof}
As the number of elements in a finite power set must be of the form $2^x$ for some positive integer $x$, the correspondence follows. If $2^x = k^2$, then $x = {2 \log_2 k}$. \qed
\end{proof}

\begin{remark}
The previous theorem translates to a very sparse distribution of such  models. In fact for $n\leq 10^8$, the total number of models is $27$. \textsf{Fig.}\ref{fig3} gives an idea of the numbers that work.
\end{remark}

\begin{figure*}[hbt]
\centering
\includegraphics[width=11.7cm]{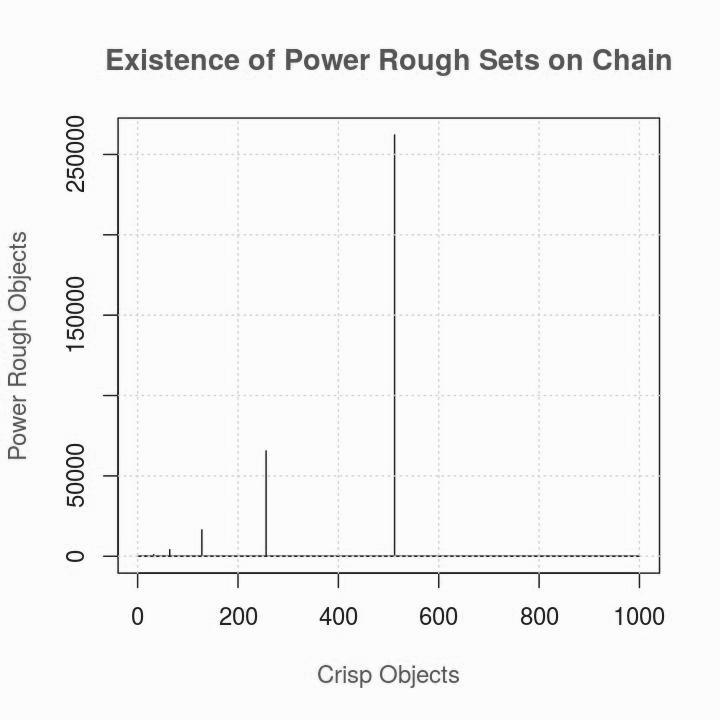}
\caption{Existence of Power Rough Sets on Chain}\label{fig3}
\end{figure*}

\section{Relaxed Distribution of Objects Over Chains: RDC}
\begin{definition}
A distribution of rough objects relative to a chain of crisp objects $C$ will be said to be a $\alpha$-\emph{Relaxed Distribution of Objects over Chains} if the minimal assumptions (Subsection.\ref{sf}) and the following three conditions hold.
\begin{itemize}
\item {$C$ forms a chain under $\prec$ order.}
\item {$\varphi $ is not necessarily a surjection and}
\item {\[\# (\varphi(R)) \leq \alpha (k^2 - k),\] for some rational $\alpha\in (0, 1]$ (the interpretation of $\alpha$ being that of a loose upper bound rather than an exact one).}
\end{itemize}
Any value of $\alpha$ that is consistent with the inequality will be referred to as an \emph{admissible} value of $\alpha$. 
\end{definition}

\begin{example}
The following modifications, in the context of Example. \ref{exa3}, are more common in practice:
\begin{itemize}
\item {No non crisp laptops may be represented by some pairs of crisp laptops and consequently $\varphi$ would not be a surjection onto $C^2 \setminus \Delta_C$ and}
\item {an estimate of the number of rough laptops may be known (this applies when too many models are available).}
\end{itemize}
These can lead to some estimate of $\alpha$. \emph{It should be noted that a natural subproblem is that of finding good values of $\alpha$}.
\end{example}

\begin{theorem}\label{fract}
In the context of relaxed distribution of objects over chains it is provable that,
for fixed $n$ the possible values of $k$ correspond to integral solutions of the formula:
\[k = \dfrac{(\pi - 1) + \sqrt{(1-\pi)^2 +4n\pi}}{2\pi},\] subject to $k\leq \lfloor\sqrt{n}\rfloor$, $\# (\varphi(R)) = \pi (k^2 - k)$ and $0 < \pi \leq \alpha$.
\end{theorem}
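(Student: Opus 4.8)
The plan is to collapse the whole statement to a single quadratic in $k$ and then invert it by the quadratic formula, with integrality as the selection criterion. First I would invoke the minimal assumptions: conditions \textsf{R1}, \textsf{R2} and \textsf{RC1} together say that $R$ and $C$ partition $\mathbb{S}$, so from $\#(\mathbb{S}) = n$ and $\#(C) = k$ the number of rough objects is exactly $n - k$. This is the same opening move as in the well-distributed theorem, and it is the only place the partition hypotheses are used.

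Next I would pin down $\#(\varphi(R))$. By \textsf{RC2} every rough object is sent by $\varphi$ into an off-diagonal pair $(a,b)$ with $a\subset b$, i.e.\ into $C^2\setminus\Delta_C$, a set of cardinality $k^2 - k$. Reading the correspondence as one that assigns distinct pairs to distinct rough objects, so that $\varphi$ is counted injectively on $R$, gives $\#(\varphi(R)) = \#(R) = n-k$. The parameter $\pi$ is then defined through $\#(\varphi(R)) = \pi(k^2-k)$, so that $\pi = (n-k)/(k^2-k)$ is precisely the fraction of available pairs actually used; the bound $0 < \pi \le \alpha$ is inherited directly from the \textsf{RDC} defining inequality $\#(\varphi(R)) \le \alpha(k^2-k)$.

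Combining the two counts yields the governing identity $n - k = \pi(k^2 - k)$, which I would rearrange into the quadratic $\pi k^2 + (1-\pi)k - n = 0$. Applying the quadratic formula and discarding the negative root (since $k>0$ is a count) produces exactly the displayed closed form $k = \frac{(\pi-1)+\sqrt{(1-\pi)^2+4n\pi}}{2\pi}$. Integrality of $k$ is then the content of $\stackrel{i}{=}$: for fixed $n$, only those admissible $\pi \le \alpha$ for which the right-hand side evaluates to a positive integer yield genuine values of $k$. The tie to $\sqrt n$ comes from $\pi \le 1$, since at the saturated value $\pi = 1$ the identity degenerates to $n = k^2$, recovering the well-distributed case $k=\sqrt n$ of the previous theorem; the side condition relating $k$ to $\lfloor\sqrt n\rfloor$ together with integrality then carves out the finitely many admissible $k$.

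The algebra is routine, so I expect the only delicate point to be conceptual rather than computational: justifying that the relaxed correspondence still forces the \emph{exact} equality $\#(\varphi(R)) = n - k$ rather than a mere inequality. Without the injective reading, a non-injective $\varphi$ would give only $\#(\varphi(R)) \le n-k$, and the quadratic — hence the closed form — would fail to be an equation. Securing that equality (by treating each rough object as represented by its own pair) is therefore the hinge of the argument; once it is in place, the inversion and the integrality selection follow immediately.
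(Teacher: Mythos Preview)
Your approach is essentially the same as the paper's: both reduce to the identity $n-k=\pi(k^2-k)$, rearrange to the quadratic $\pi k^2+(1-\pi)k-n=0$, and take the positive root via the quadratic formula with integrality as the selection criterion. You are in fact more explicit than the paper in flagging the injective reading of $\varphi$ needed to get the \emph{equality} $\#(\varphi(R))=n-k$ rather than a mere inequality; the paper's proof simply opens with ``When $n-k=\pi(k^2-k)$'' and proceeds, so your identification of this as the hinge is a genuine clarification rather than a deviation.
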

\begin{proof}
\begin{itemize}
\item {When $n-k = \pi (k^2- k )$ then $\pi = \dfrac{(n - k)}{(k^2 - k)}$}
\item {So positive integral solutions of $k = \dfrac{(\pi - 1) + \sqrt{(1-\pi)^2 +4n\pi}}{2\pi}$ may be admissible.}
\item {The expression for $\alpha$ means that it can only take a finite set of values given $n$ as possible values of $k$ must be in the set $\{2, 3, \ldots , \lfloor\sqrt {\frac{n}{\alpha}}\rfloor\}$.}
\end{itemize}
\qed
\end{proof}

\begin{remark}
The bounds for $k$ are not necessarily the best ones.
\end{remark}

\begin{theorem}
In the proof of the above theorem (Thm. \ref{fract}), fixed values of $n$ and $\pi$ do not in general correspond to unique values of $k$ and unique models. 
\end{theorem}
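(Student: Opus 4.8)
The plan is to isolate the hidden hypothesis in the proof of Theorem~\ref{fract} and then break it. That proof passes silently from the count of rough objects $n-k$ to the count of \emph{image pairs} $\#(\varphi(R)) = \pi(k^2-k)$, writing $n - k = \pi(k^2 - k)$. This step is only legitimate when $\varphi$ is injective, since in general the map $\varphi : R \longmapsto C^2$ of assumption \textsf{R3} carries no one-to-one requirement. So the first move is to record that, once injectivity is dropped, the correct relation between $n$, $k$ and $\pi$ is not the equation $n-k = \pi(k^2-k)$ but the inequality $\pi(k^2 - k) \leq n - k$ (the image cannot exceed the domain), supplemented by the integrality demand $\pi(k^2-k) \in \mathbb{N}$ and the admissibility bounds $2 \leq k \leq \lfloor\sqrt{n}\rfloor$.

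First I would exhibit a concrete counterexample for the non-uniqueness of $k$. Taking $\pi = 1$ and $n = 100$, the quantity $\pi(k^2-k) = k^2 - k$ is automatically integral and the inequality reduces to $k^2 \leq n$, that is $k \leq 10$. Hence every $k \in \{2,3,\ldots,10\}$ is admissible, each realised by letting $\varphi$ surject the $n-k$ rough objects onto all $k^2-k$ off-diagonal pairs. By contrast the formula of Theorem~\ref{fract}, which tacitly forces $\varphi$ injective, returns the single value $k = 10$ (there $n-k = k^2-k = 90$). Thus the pair $(n,\pi)$ fails to determine $k$.

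Next I would treat non-uniqueness of models by fixing a single admissible triple $(n,k,\pi)$ and varying the underlying structure. Two independent degrees of freedom remain: when $\pi < 1$ one may choose \emph{which} $\pi(k^2-k)$ of the off-diagonal pairs constitute $\varphi(R)$, and whenever $n-k > \pi(k^2-k)$ one may apportion the surplus rough objects among the chosen image pairs in more than one way, so that the fibres of $\varphi$ receive different sizes. A short counting argument then shows that, outside degenerate boundary cases, at least two such assignments exist that are not carried to one another by any \shg isomorphism, so the model is not unique even once $k$ has been pinned down.

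The principal obstacle I anticipate lies not in the counting but in the bookkeeping of \emph{what counts as a model} and in verifying that each proposed realisation is a genuine \shg. Concretely, I must confirm that a chain of $k$ crisp objects together with $n-k$ rough objects and the prescribed $\varphi$ can always be organised so that the axioms of Definition~\ref{hos} (\textsf{WRA}, \textsf{LS}, \textsf{FU}) and the minimal assumptions \textsf{F1}--\textsf{RC2} all hold for the chosen $k$, and I must fix a precise notion of sameness, namely \shg isomorphism, under which the two realisations above are demonstrably distinct. Once this is settled the theorem follows, since the counterexamples already defeat uniqueness of both $k$ and the model.
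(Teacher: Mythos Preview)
Your proposal is correct and considerably more thorough than what the paper itself supplies. The paper attaches no formal proof to this theorem; immediately after the statement it offers only the numerical observation that for $\pi = 0.5$ and $n = 10^{6}$ roughly $1413$ values of $k$ ``work'' (with a reference to a figure for the bounded case). Both you and the paper therefore establish non-uniqueness by exhibiting a concrete pair $(n,\pi)$ admitting many $k$.

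Where you add value is in naming the mechanism. The paper never explains \emph{why} the quadratic formula of Theorem~\ref{fract} fails to single out $k$; you correctly trace this to the silent passage from $\#(\varphi(R)) = \pi(k^{2}-k)$ to $n-k = \pi(k^{2}-k)$, which presupposes injectivity of $\varphi$ and, once dropped, relaxes to the inequality $\pi(k^{2}-k) \leq n-k$. Indeed, the paper's figure $1413$ is nothing other than the integer part of the positive root of that quadratic, i.e.\ the top of the admissible range your inequality produces. Your example $(\pi,n)=(1,100)$ is smaller and sharper than the paper's and makes the contrast explicit (the formula returns only $k=10$, whereas the actual admissible set is $\{2,\ldots,10\}$). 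You also address the ``unique models'' clause by varying the image of $\varphi$ and the fibre sizes for fixed $(n,k,\pi)$, a point the paper leaves entirely untouched. The residual obstacle you flag --- checking that each candidate $k$ is realised by a genuine \textsf{SHG} satisfying Definition~\ref{hos} and the minimal assumptions --- is real but routine in the chain case, so your plan is sound.
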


If the mentioned bounds on $k$ are not imposed then it might appear that \emph{for $\pi=0.5$ and $n=1000000$, the number of values of $k$ that work seem to be $1413$}. If the bounds on $k$ are imposed then \textsf{Fig.}\ref{fig5} gives a description of the resulting pattern of values:

\begin{figure*}[hbt]
\centering
\includegraphics[width=11.7cm]{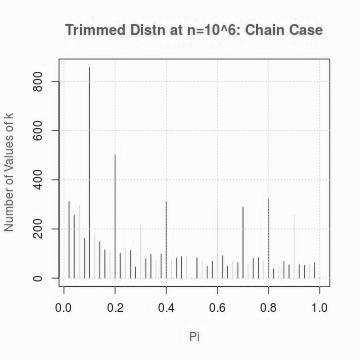}
\caption{Trimmed Number of Possible Values of $k$}\label{fig5}
\end{figure*}

\subsection*{Algorithms: RDC}
A purely arbitrary method of supplying values of $\alpha$ based on \emph{some heuristics} cannot be a tractable idea. To improve on this some algorithms for computing admissible values of $alpha$ are proposed in this subsection.

\begin{flushleft}
\textbf{RDC Algorithm-1} 
\end{flushleft}

\begin{enumerate}
\item {Fix the value of $n$.} 
\item {Start from possible values of $k$ less than $\sqrt{n - 1}$.}
\item {Compute $\alpha$ for all of these values.}
\item {Suppose the computed values are $\alpha_1, \ldots \alpha_r$}
\item {Check the admissibility of solutions.}
\end{enumerate}

\begin{flushleft}
\textbf{RDC Algorithm-2} 
\end{flushleft}

Another algorithm for converging to solutions is the following:
\begin{enumerate}
\item {Start from a sequence $\{\alpha_i\}$ of possible values in the interval $(0,1)$.}
\item {Check the admissibility and closeness to solutions}
\item {If a solution appears to be between $\alpha_i$ and $\alpha_{i+1}$, add an equally spaced subsequence between the two.}
\item {Check the admissibility and closeness to solutions.}
\item {Continue}
\item {Stop when solution is found}
\end{enumerate}

\begin{theorem}
Both of the above algorithms converge in a finite number of steps. 
\end{theorem}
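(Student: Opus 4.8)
The plan is to handle the two algorithms separately, since the termination of Algorithm-1 is essentially a boundedness observation whereas Algorithm-2 needs an argument resting on the discreteness of the admissible parameter set.

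For Algorithm-1, I would first note that, with $n$ fixed and finite, the candidate values of $k$ are integers subject to $k < \sqrt{n-1}$, hence $k \leq \lfloor\sqrt{n}\rfloor$ by Theorem \ref{fract}. Thus at most $\lfloor\sqrt{n}\rfloor$ candidates need to be examined. For each such $k$ the corresponding $\pi = \frac{n-k}{k^2 - k}$ (and thence $\alpha$) is obtained by a single evaluation, and the admissibility test $0 < \pi \leq \alpha \leq 1$ together with $k \leq \lfloor\sqrt{n}\rfloor$ is a finite comparison. Summing over the finitely many candidates yields termination in a number of steps linear in $\sqrt{n}$.

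The substance of the claim lies in Algorithm-2, where I would argue as follows. By Theorem \ref{fract}, the values of $k$ that yield genuine solutions lie in the finite set $\{2, 3, \ldots, \lfloor\sqrt{n}\rfloor\}$, so the exact admissible parameters $\pi_k = \frac{n-k}{k^2 - k}$ form a finite subset $P \subseteq (0,1]$. Being finite, the elements of $P$ are pairwise separated by a strictly positive minimal gap $\delta = \min\{\,|\pi_j - \pi_k| : \pi_j, \pi_k \in P,\ \pi_j \neq \pi_k\,\} > 0$. Each refinement step inserts an equally spaced subsequence into a suspect bracket $(\alpha_i, \alpha_{i+1})$, so the sampling mesh strictly shrinks by a fixed factor $\rho > 1$; after at most $\lceil \log_\rho(\ell_0/\delta)\rceil$ rounds (with $\ell_0$ the initial spacing) the mesh drops below $\delta$. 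Once the mesh is below $\delta$, every element of $P$ is isolated inside its own bracket, so the closeness test pins down the solution value and the stopping criterion fires.

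The step I expect to be the main obstacle is making the detection and stopping criterion of Algorithm-2 precise. The informal phrase ``a solution appears to be between $\alpha_i$ and $\alpha_{i+1}$'' must be tied to a concrete decidable test, for instance the sign change of the residual $n - k - \pi(k^2 - k)$ across the bracket, or the rounding of the $k$ computed from the bracketing $\pi$ values to a common integer. Once such a test is fixed and shown to bracket at least one $\pi_k$ whenever one is present, the positivity of $\delta$ prevents indefinite refinement and the finiteness of $P$ bounds the number of distinct brackets that can ever be refined, so Algorithm-2 halts in finitely many steps.
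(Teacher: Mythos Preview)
Your treatment of Algorithm-1 coincides with the paper's: both simply observe that with $n$ fixed there are only finitely many candidate integers $k$ to examine, so termination is immediate.

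For Algorithm-2 your route is genuinely different. The paper argues by a limit construction: starting from an equally spaced partition of $(0,1)$ and repeatedly subdividing the bracket containing the target $\alpha$, one gets a nested sequence with $(\forall\epsilon>0\,\exists N\,\forall r>N)\,|\alpha-\alpha_{rj}|<\epsilon$, and from this concludes that the algorithm succeeds. You instead exploit the discreteness of the admissible parameter set: since only finitely many integers $k$ are in play, the exact values $\pi_k=(n-k)/(k^2-k)$ form a finite subset of $(0,1]$ with a strictly positive minimal separation $\delta$, and the geometric shrinking of the mesh forces it below $\delta$ in a bounded number of rounds, after which each $\pi_k$ is isolated in its own bracket and the stopping test fires. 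Your argument speaks more directly to the word \emph{finite} in the statement, since an $\epsilon$--$N$ estimate by itself yields approximation to arbitrary precision rather than exact identification in finitely many steps; it is precisely the discreteness of the target set that converts asymptotic closeness into actual termination, and you make that mechanism explicit. Your candid caveat about needing a concrete decidable detection and stopping criterion is well placed and applies to the paper's version just as much as to yours.
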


\begin{proof}
Convergence of the first algorithm is obvious.

Convergence of the second follows from the following construction:
\begin{itemize}
\item {Suppose the goal is to converge to an $\alpha \in (0,1) $.}
\item {Let $\alpha_o =0, \, \alpha_1 = 1 $ and for a fixed positive integer $n$ and $i= 1, \ldots, n$, let $\alpha_{1i} = \frac{i}{n}$ and $\alpha \in (\alpha_{1j}, \alpha_{1 j+1})$. }
\item {Form $n$ number of equally spaced partitions $\{\alpha_{2i}\}$ of $(\alpha_{1j}, \alpha_{1 j+1})$ and let $\alpha \in (\alpha_{2j}, \alpha_{2 j+1})$.}
\item {Clearly $(\forall \epsilon >0 \, \exists N\, \forall r>N)\, |\alpha - \alpha_{rj}| < \epsilon $}
\item {So the algorithm will succeed in finding the required $\alpha$. }
\end{itemize}
\qed 
\end{proof}

\section{Relaxed Bounded Distribution on Chains: RBC}

\begin{definition}
A distribution of rough objects relative to a chain of crisp objects $C$ will be said to be a $\alpha$-\emph{Relaxed Bounded Distribution of Objects over Chains} (\textsf{RBC}) if the minimal assumptions (Subsection.\ref{sf}) and the following three conditions hold.

\begin{itemize}
\item {$C$ forms a chain under $\prec$ order and $\varphi $ is not necessarily a surjection,}
\item {\[\# (\varphi(R)) \leq \alpha (k^2 - k),\] for some rational $\alpha\in (0, 1]$ (the interpretation of $\alpha$ being that of a loose upper bound rather than an exact one) and}
\item{$R$ is partitioned into disjoint subsets of size $\{r_i \}_{i=1}^{g}$ with $g= k^2 -k$ subject to the condition 
\begin{align*}
\tag{$\beta$}  a \leq r_i \leq b \leq n -k, \text{  with  }a,\, b   \text{ being constants.}  
\end{align*}}
\end{itemize}
Any value of $\alpha$ that is consistent with the inequality will be referred to as an \emph{admissible} value of $\alpha$. 
\end{definition}

\textsf{RBC} differs from \textsf{RDC} in the explicit specification of bounds on number of objects that may be represented by a pair of crisp objects. 

\begin{proposition}
For $a=0$ and $b=n - k$, every \textsf{RDC} is a \textsf{RBC}. 
\end{proposition}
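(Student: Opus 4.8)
The plan is to observe that the definition of an \textsf{RBC} consists of exactly the defining clauses of an \textsf{RDC} together with one extra structural demand: a partition of $R$ into $g = k^2 - k$ disjoint blocks whose sizes $r_i$ obey the bound $(\beta)$. Since the chain condition on $C$, the non-surjectivity of $\varphi$, the inequality $\#(\varphi(R)) \le \alpha(k^2-k)$, and all the minimal assumptions of Subsection \ref{sf} are shared verbatim by both notions, the entire task reduces to exhibiting, for any given \textsf{RDC} and for the specific values $a = 0$, $b = n-k$, such a partition satisfying $0 \le r_i \le n-k$.

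First I would produce the partition canonically from $\varphi$ itself. By assumption \textsf{RC2}, $\varphi$ maps $R$ into $C^{2}\setminus\Delta_{C}$, a set of cardinality exactly $k^{2}-k = g$. Enumerating its elements as $(a_1,b_1),\dots,(a_g,b_g)$ and setting $R_i = \varphi^{-1}(\{(a_i,b_i)\})$, the blocks $R_i$ are pairwise disjoint and cover $R$, so $\{R_i\}_{i=1}^{g}$ is a partition of $R$ into precisely $g = k^2-k$ subsets (some possibly empty). Writing $r_i = \#(R_i)$ then furnishes the required family of block sizes indexed by $i = 1,\dots,g$.

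Next I would verify the bound $(\beta)$ for $a = 0$ and $b = n-k$. The lower inequality $0 \le r_i$ is immediate, and it is here that the choice $a=0$ is essential: because $\varphi$ need not be a surjection, some fibres $R_i$ may be empty, and only $a=0$ accommodates them. For the upper inequality, the minimal assumptions give $\#(R) = n-k$; indeed \textsf{R2} and \textsf{RC1} make $R$ and $C$ a partition of $\mathbb{S}$, whence $\#(R) + \#(C) = n$ and so $\#(R) = n-k$ by \textsf{F2} and \textsf{C2}. Since each $R_i \subseteq R$, we obtain $r_i \le \#(R) = n-k = b$, and trivially $b = n-k \le n-k$, so $(\beta)$ holds for every $i$.

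There is no genuine obstacle here; the only point demanding care is the insistence that the partition have \emph{exactly} $g = k^2-k$ blocks. This forces one to index by all pairs of $C^2\setminus\Delta_C$, not merely those in the image of $\varphi$, and it is precisely the admission of empty fibres (licensed by $a=0$) that reconciles the possible non-surjectivity of $\varphi$ with the fixed block count $g$. With the partition and the bound in hand, every clause of the \textsf{RBC} definition is met, establishing the claim.
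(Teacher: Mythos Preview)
Your argument is correct: the paper states this proposition without proof, evidently regarding it as immediate from the definitions, and your verification supplies exactly the missing details. The one subtlety you flag --- that the fibre decomposition of $R$ over all of $C^2\setminus\Delta_C$ yields precisely $g=k^2-k$ blocks only if empty blocks are permitted, which is exactly what $a=0$ licenses --- is the heart of the matter and is handled cleanly.
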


\begin{theorem}
If the crisp objects form a chain, then the total number of possible models $B$ is \[B \,= \sum_{\alpha\in \pi(r)|\beta}\prod_{i=1}^{k^2 -k} \alpha_i \text{  and  }n_o a^{k^2 - k} \leq B \leq n_o b^{k^2 - k} ,\]
with the summation being over partitions $\alpha = \{\alpha_i \}$ of $r$ subject to the condition $\beta$ and $n_o$ being the number of admissible partitions under the conditions.
\end{theorem}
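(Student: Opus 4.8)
The plan is to reduce the count of models to a purely combinatorial distribution problem and then bound the resulting sum termwise. First I would fix notation: by \textsf{R2}, \textsf{RC1} and the cardinality assumptions, the number of rough objects is $r = n - k$, and by \textsf{RC2} together with the chain hypothesis the codomain of $\varphi$ (after excluding the diagonal) has $g = \#(C^2\setminus\Delta_C) = k^2 - k$ elements. Since $\varphi$ is a function on $R$, its fibers over the distinct pairs partition $R$; this is exactly the partition of $R$ into $g$ blocks of sizes $\{r_i\}_{i=1}^{g}$ demanded by the \textsf{RBC} definition. Condition $\beta$ then says precisely that every admissible size vector is a partition $\alpha = \{\alpha_i\}$ of the integer $r$ into $g$ parts with $a \leq \alpha_i \leq b$, so the admissible size vectors are exactly the members of $\pi(r)$ satisfying $\beta$; I denote their number by $n_o$.

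Next I would establish the per-fiber multiplicity, which is the heart of the product formula. The claim implicit in the statement is that a fiber of size $\alpha_i$ contributes exactly $\alpha_i$ distinct models, so that, the fibers being mutually independent, the product rule yields $\prod_{i=1}^{g}\alpha_i$ models for each fixed admissible size vector. To justify the factor $\alpha_i$ I would invoke the definition of a model explicitly: the $\alpha_i$ rough objects assigned to a fixed pair of crisp bounds admit $\alpha_i$ distinct internal configurations (for instance, a distinguished element or insertion position among them), and different fibers may be configured without interaction. Summing $\prod_i\alpha_i$ over all admissible size vectors then gives $B = \sum_{\alpha\in\pi(r)\mid\beta}\prod_{i=1}^{k^2-k}\alpha_i$, the first assertion.

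Finally the two-sided estimate is immediate once the sum formula is in hand. For every admissible partition, each of the $g = k^2 - k$ factors satisfies $a \leq \alpha_i \leq b$, whence $a^{k^2-k} \leq \prod_{i=1}^{k^2-k}\alpha_i \leq b^{k^2-k}$; since the sum defining $B$ has exactly $n_o$ terms, adding these termwise bounds produces $n_o\,a^{k^2-k} \leq B \leq n_o\,b^{k^2-k}$.

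I expect the genuine obstacle to be the second step, namely pinning down why a single fiber carries exactly $\alpha_i$ models. The partition bookkeeping and the termwise estimate are routine, but the value $\alpha_i$ depends entirely on the convention declaring when two models are identified, so I would have to fix that convention and verify independence across fibers before the product rule legitimately applies. A secondary subtlety is partitions versus compositions: the blocks are distinguishable, being specific ordered pairs of crisp objects, so strictly one sums over compositions of $r$; I would check that $\beta$ and the definition of $n_o$ are read consistently with the symmetric product $\prod_i\alpha_i$ so that the stated bounds remain correct.
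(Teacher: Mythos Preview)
Your proposal follows essentially the same route as the paper: identify the $k^2-k$ ordered pairs of crisp objects as the ``spaces'' to be filled, enumerate partitions of $r=n-k$ into $k^2-k$ parts, filter by condition $\beta$, assert that each admissible partition contributes $\prod_i\alpha_i$ models, sum, and then bound termwise using $a\leq\alpha_i\leq b$. The paper's proof is a five-line sketch that does exactly this and, in particular, does \emph{not} justify the step you correctly flagged as the genuine obstacle---it simply asserts that ``each of the partitions $\alpha\in\pi(r)|\beta$ corresponds to $\prod_i\alpha_i$ number of possibilities'' without saying what a model is or why the count is a product of the part sizes; likewise the partition-versus-composition issue you raise is not discussed there.
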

\begin{proof}
\begin{itemize}
\item {On a chain of length $k$, $k^2 -k$ spaces can be filled.}
\item {The next step is to determine the partitions $\pi(r)$ of $r$ into $k^2 -k$ distinct parts.}
\item {The condition $\beta$ eliminates many of these partitions resulting in the admissible set of partitions $\pi(r)|\beta$.}
\item {Each of the partitions $\alpha \in \pi(r)|\beta$ corresponds to $\prod_i \alpha_i $ number of possibilities.}
\item {So the result follows.}
\end{itemize}
\qed
\end{proof}

\section{Distribution of Objects: General Context}

\begin{definition}
A distribution of rough objects relative to a poset of crisp objects $C$ will be said to be a $\alpha$-\emph{Relaxed Bounded Distribution of Objects } (\textsf{RBO}) if the minimal assumptions (Subsection.\ref{sf}) without the restriction \textsf{R2} and the following three conditions hold:
\begin{itemize}
\item {$\# (\varphi(R)) = t \leq n-k$,}
\item {$t = \beta (k^2 -k)$ and,}
\item {$n - k = \alpha (k^2 - k)$. for some constants $t,\, \beta,\, \alpha$}
\end{itemize}

\end{definition}

\begin{example}
In the context of Example.\ref{exa3}, if Alice is not able to indicate a single criteria for the chain order, then the whole context would naturally fall under the context of this section.  
\end{example}

This perspective can also be used in more general contexts that fall outside the scope of \textsf{SHG}. It is possible, in practice, that objects are neither crisp or rough. This can happen, for example, when:
\begin{itemize}
\item {a consistent method of identifying crisp objects is not used or}
\item {some objects are merely labeled on the basis of poorly defined partials of features or}
\item {a sufficiently rich set of features that can provide for consistent identification is not used}
\end{itemize}

For \textsf{RBO}, in the absence of additional information about the order structure, it is possible to rely on chain decompositions or use generalized ideals and choice functions for developing computational considerations based on the material of the earlier sections. The latter is specified first in what follows.

\begin{definition}
The \emph{lower definable scope} $\mathbf{SL}(x)$ of an element $x\in R$ will be the set of maximal elements in $\downarrow (x) \cap C$, that is \[\mathbf{SL}(x) = \max(\downarrow(x) \cap C). \]

The \emph{upper definable scope} $\mathbf{SU}(x)$ of an element $x\in R$ will be the set of minimal elements in $\uparrow (x) \cap C$, that is \[\mathbf{SU}(x) = \min(\uparrow(x) \cap C). \]
\end{definition}

All representations of rough objects can be seen as the result of choice operations \[\psi_x : \mathbf{SL}(x) \times \mathbf{SU}(x) \longmapsto C^2 \setminus \Delta_C . \] Letting $\# (\mathbf{SL} (x)) = c(x)$ and $\# (\mathbf{SU} (x)) = v(x)$ formulas for possible values may be obtainable. Finding a simplification without additional assumptions remains an open problem though.

\subsection*{Chain Covers}

Let $C^*$ be the set of crisp objects $C$ with the induced partial order, then by the theorem in Sec \ref{wth},  
the order structure of the poset of crisp objects $C^*$ permits a disjoint chain cover. This permits an incomplete strategy for estimating the structure of possible models and counting the number of models. 

\begin{itemize}
\item {Let $\{C_i \,:\, i=1, \ldots h \}$ be a disjoint chain cover of $C^*$. Chains starting from $a$ and ending at $b$ will be denoted by $[[a, b]]$.}
\item {Let $C_1$ be the chain $[[0, 1]]$ from the the smallest(empty) to the largest object.}
\item {If $C_1$ has no branching points, then without loss of generality, it can be assumed that $C_2 = [[c_{2l}, c_{2g}]]$ is another chain with least element $c_{2l}$ and greatest element $c_{2g}$ such that $0 \prec c_{21}$, possibly $c_{2g} \prec 1$ and certainly $c_{2g} < 1$. }
\item {If $c_{2g} < 1$, then the least element of at least two other chains ($[[c_{3l}, \, c_{3g}]]$ and $[[c_{4l}, \, c_{4g}]]$) must cover $c_{2g}$, that is $c_{2g}\prec c_{3l}$ and $c_{2g}\prec c_{4l}$.}
\item {This process can be extended till the whole poset is covered. }
\item {The first step for distributing the rough objects amongst these crisp objects consists in identifying the spaces distributed over maximal chains on the disjoint cover subject to avoiding over counting of parts of chains below branching points.}
\end{itemize}

The above motivates the following combinatorial problem for solving the general problem: 

Let $H = [[c_l,c_g]] $ be a chain of crisp objects with $\#(H) = \alpha$ and let $c_o$ be a branching point on the chain with $\# ([[c_l , c_o]]) = \alpha_o$. Let \[S_C = \{(a, b) \,; \,a, b\in [[c_o, c_g]] \text{ or } c_l < a, b < c_o \}.\] In how many ways can a subset $R_f \subseteq R$ of rough objects be distributed over $S_C$ under $\# (R_f) = \pi$?

\begin{theorem}
If the number of possible ways of distributing $r$ rough objects over a chain of $h$ crisp elements is $n(r,h )$, then the number of models in the above problem is \[n (r,h ) - n (r , h_o).\]  
\end{theorem}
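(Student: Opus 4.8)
The plan is to prove the formula by \emph{complementary counting}: I would count all distributions of the $r=\pi$ rough objects over the full chain $H$ of $h=\alpha$ crisp elements, and then subtract exactly those distributions that are illegitimate because they lie entirely below the branching point $c_o$, i.e.\ inside the shared subchain $[[c_l,c_o]]$ of $h_o=\alpha_o$ elements. First I would record the structure of $S_C$: by definition it is the disjoint union of the pairs drawn from the upper block $[[c_o,c_g]]$ and the pairs drawn from the open lower block $\{x : c_l < x < c_o\}$, so that it collects precisely the admissible ``spaces'' of $H$ that are not wholly trapped in the shared region, while the straddling pairs and the pairs meeting $c_l$ are deliberately excluded. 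This two-block decomposition is the object over which the $\pi$ rough objects are to be placed.

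Next I would establish the two counts that enter the formula. The quantity $n(r,h)$ is, by hypothesis, the number of ways of distributing the $r$ rough objects over the chain $H$ of $h$ crisp elements; the distributions whose support lies entirely within the lower subchain $[[c_l,c_o]]$ use only the $h_o^2 - h_o$ spaces of that subchain and are therefore counted by $n(r,h_o)$. The central step is a bijection showing that distributing $r$ rough objects over $S_C$ is equinumerous with distributing them over $H$ subject to the constraint that the support is not confined to $[[c_l,c_o]]$. Once this is in hand, complementary counting over the subset relation $n(r,h_o) \le n(r,h)$ yields $n(r,h) - n(r,h_o)$, which is the asserted number of models.

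The hard part will be making this bijection precise, because $S_C$ as literally written is a two-block set that is \emph{not} set-theoretically equal to the collection of all spaces of $H$ that merely avoid the shared lower subchain (the latter still contains the straddling pairs and the pairs involving $c_l$). I would therefore have to argue that the branching-point bookkeeping --- each maximal chain of the disjoint cover must not re-count the spaces it shares below $c_o$ --- forces exactly the two-block description of $S_C$, treating the shared vertex $c_o$ as belonging to the upper block and discarding the straddling contributions as already accounted for on other chains of the cover. A secondary technical point is to verify that $n(r,\cdot)$ is genuinely additive with respect to this subset relation, so that the confined-below distributions form a subcollection counted by $n(r,h_o)$ independently of the ambient chain; this additivity is what licenses the subtraction and completes the argument.
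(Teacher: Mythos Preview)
Your core approach---complementary counting by taking all distributions over the full chain $H$ of $h$ elements and subtracting those confined to the shared lower subchain $[[c_l,c_o]]$ of $h_o$ elements---is exactly the paper's approach. The paper's entire proof is a single sentence: ``This is because the places between crisp objects in $[[c_l,c_o]]$ must be omitted,'' followed by a remark that $n(r,h)$ has been described earlier. So at the level of strategy you match the paper precisely, and you are far more explicit than the paper itself.

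That said, the technical concerns you flag in your final two paragraphs---that $S_C$ as literally defined is a two-block set excluding the straddling pairs and the pairs touching $c_l$, so it is not set-theoretically the complement of the lower-subchain pairs inside the full chain, and that the additivity of $n(r,\cdot)$ needs to be checked---are \emph{not} addressed by the paper. The paper simply asserts the subtraction without engaging with the bookkeeping you describe. So your ``hard part'' is real, but it is a gap in the original argument rather than a deviation from it; you would be filling in detail that the paper omits, not taking a different route.
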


\begin{proof}
This is because the places between crisp objects in $[[c_l,c_o ]] $ must be omitted. The exact expression of $n(r, h)$ has already been described earlier.
\qed 
\end{proof}

Using the above theorem it is possible to evaluate the models starting with splitting of $n-k$ into atmost $w$ partitions. \emph{Because of this it is not necessary to use principal order filters generated by crisp objects to arrive at direct counts of the number of possible cases and a representation schematics}. 

\subsection{Applications: Hybrid Swarm Optimization}

Many unsupervised and semi-supervised algorithms do not converge properly and steps involved may have dense and unclear meaning. The justification for using such algorithms often involve analogies that may appear to be reasonable at one level and definitely suspect in broader perspectives - typically this can be expected to happen when the independent intelligence of computational agents or potential sources of intelligence in the context are disregarded. For example, the class of ant colony algorithms (see \cite{DB2005}) uses probabilist assumptions and restricted scope for control at the cost of simplifying assumptions.

For example, for a set of robots to navigate unfamiliar terrain with obstacles, a swarm optimization method like the polymorphic ant colony optimization method may be used \cite{QCZ2014}. The method involves scouts, workers and other types of robots (ants). Additional information about the terrain can be used to assess the quality of paths being found through the methods developed in this paper - the guiding principle for this can be that \emph{if the approximations of obstacles or better paths do not fit in a rough scheme of things, then the polymorphic optimization method is warranted}.  

The other kind of situation where the same heuristics can apply is when the robots are not fully autonomous and under partial control as in a hacking context. More details of these applications will appear in a separate paper.
 
\section{Interpretation and Directions}

The results proved in this research are relevant from multiple perspectives. In the perspective that does not bother with issues of contamination, the results mean that the number of rough models relative to the number of other possible models of computational intelligence is low. This can be disputed as the signature of the model is restricted and categoricity does not hold. 

In the perspective of the contamination problem, the axiomatic approach to granules, the results help in handling inverse problems in particular. From a minimum of information, it may be possible to deduce
\begin{itemize}
\item {whether a rough model is possible or}
\item {whether a rough model is not possible or}
\item {whether the given data is part of some minimal rough extensions}
\end{itemize}
The last possibility can be solved by keeping fixed the number of rough objects or otherwise. These problems apply for the contaminated approach too. It should be noted that extensions need to make sense in the first place.
The results can also be expected to have many applications in hybrid, probabilist approaches and variants.

An important problem that has not been explored in this paper is the concept of isomorphism between higher granular operator spaces. This is considered in a forthcoming paper by the present author.

\bibliographystyle{splncs.bst}
\bibliography{../bib/biblioam2016xx.bib}
\end{document}